\newtheorem{thm}{Theorem}[section]
\newtheorem{lem}[thm]{Lemma}
\theoremstyle{definition}
\newtheorem*{rmk}{Remark}
\theoremstyle{remark}
\numberwithin{equation}{section}
\begin{document}

\title{Periodic orbits of period $3$ in the disc}
\author{Boris Kolev}%
\date{28 January 1994}

\address{CMI, 39 rue F. Joliot-Curie, 13453 Marseille cedex 13, France}
\email{kolev@cmi.univ-mrs.fr}

\subjclass{54H20, 58F13, 34C35}%
\keywords{Dynamics of surfaces homeomorphisms; periodic orbits}%

\maketitle


\begin{abstract}
Let $f$ be an orientation preserving homeomorphism of the disc
$D^{2}$ which possesses a periodic point of period 3. Then either
$f$ is isotopic, relative the periodic orbit, to a homeomorphism $g$
which is conjugate to a rotation by $2\pi/3$ or $4\pi/3$, or $J$ has
a periodic point of least period $n$ for each $n\in \mathbb{N}^{*}$.
\end{abstract}


\section{Introduction}

Some very famous results of one dimensional dynamics state that a
continuous map of the interval which has a periodic point of period
3 has periodic points of all other periods \cite{LY75,Sar64} and
positive topological entropy \cite{BF76}. These results are
certainly not true in higher dimensions; a rotation by $2\pi/3$ in
the disc $D^{2}$ has periodic points of period 3 and 1 only and
topological entropy zero. In fact the period alone is not enough to
specify a periodic orbit of a surface's homeomorphism $f$ in order
to get information about the structure of the set of periodic points
of $f$.

What sort of stronger hypothesis do we have to add in order to get
similar results in dimension two ? This problem has been extensively
studied these last decades and a lot of results have emerged.
Blanchard and Franks \cite{BF80} have shown that an orientation
reversing homeomorphism $f$ of the $2$-sphere which has a point of
period $p$ and a point of period $q$ where $p$ and $q$ are distinct
odd integers has positive topological entropy. If in addition, $f$
is $C^{1+\epsilon}$ \cite{Kat80}, $f$ has periodic points of
arbitrary large period. Handel \cite{Han82} has generalized this
result to orientation reversing homeomorphisms of any compact
surface $M$ using Nielsen-Thurston theory. Following an idea of
Bowen \cite{Bow78}, these authors specify a periodic point $z$ by
the isotopy class of the restriction $f_{z}$ of $f$ to $M_{z}=M-o(z,
f)$; the complement in $M$ of the orbit of $z$ under $f$. This idea
has since been formalized by Boyland \cite{Boy94} who introduced the
`braid type' of a periodic orbit. Specifically, if $[f_{z}]$ denotes
the isotopy class of $f_{z}$ in $M_{z}$, define the braid type of
$o(z, f)$, denoted $bt(z, f)$, to be the conjugacy class of
$[f_{z}]$ in the mapping class group of $M_{z}$. Recently several
authors \cite{GST89,LM90} have characterized all the braid types
which appear for diffeomorphisms of genus zero surfaces with
topological entropy zero.

More recently, Gambaudo, van Strien and Tresser \cite{GST90} have
given an example of a braid type of period 3 in the disc which imply
all other periods and positive topological entropy, which is the
exact analogue of the result in one dimensional dynamics stated
previously. In this paper we show that this is a quite general
result, namely:

\begin{thm}\label{Main_theorem}
If $f$ is an orientation preserving homeomorphism of the disc
$D^{2}$ which has a periodic point $z$ of period 3, then either $f$
is isotopic to a homeomorphism $g$ which is conjugate to a rotation
by $2\pi/3$ or $4\pi/3$ or $f$ has periodic points of least period
$n$ for each $n\in \mathbb{N}^{*}$.
\end{thm}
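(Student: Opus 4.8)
The plan is to use the Nielsen–Thurston classification applied to the isotopy class $[f_z]$ of the restriction of $f$ to the thrice-punctured disc $M_z = D^2 - o(z,f)$. Since the orbit of $z$ has period $3$, $f_z$ permutes the three punctures cyclically, and the mapping class group of the thrice-punctured disc is well understood (it is the braid group $B_3$ on three strands, modulo suitable identifications fixing the boundary). By the Thurston trichotomy, $[f_z]$ is either periodic (finite order), reducible, or pseudo-Anosov. My strategy is to show that in the pseudo-Anosov and nontrivially reducible cases the map forces periodic points of all periods, while the periodic case yields the rotation alternative.

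\medskip

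First I would handle the pseudo-Anosov case. Here Thurston theory provides a pseudo-Anosov representative $\varphi$ in the isotopy class, and by the standard results of Thurston and of Fried, the topological entropy of $\varphi$ is $\log\lambda > 0$ where $\lambda$ is the dilatation, and the number of periodic points grows like the Lefschetz/Artin–Mazur count. More concretely, I would invoke the fact that a pseudo-Anosov map on a punctured disc has periodic points of every sufficiently large period, and then use the Asimov–Franks / Nielsen-theoretic principle that every periodic orbit of the pseudo-Anosov representative is \emph{forced}: any homeomorphism isotopic to $f_z$ must have periodic orbits realizing the same Nielsen classes. For the exact conclusion ``period $n$ for each $n \in \mathbb{N}^*$'' I would compute the dilatation explicitly for the relevant braid types of period $3$, as in the example of Gambaudo–van Strien–Tresser \cite{GST90}, and verify that the Markov partition / train-track transition matrix is Perron–Frobenius with the combinatorics of a horseshoe-like subsystem, which guarantees all periods.

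\medskip

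Next I would treat the reducible case. A reduction system is a family of disjoint essential simple closed curves in $M_z$ permuted by $f_z$. Because there are only three punctures, the essential curves available are very limited: each bounds a subsurface containing a proper nonempty subset of the punctures. I would enumerate the possible invariant curve systems up to isotopy, cut along them, and apply the classification inductively to the pieces. The key observation is that any genuine reduction either isolates a subset of punctures in a way incompatible with the cyclic period-$3$ permutation, or produces an annular piece on which the first-return map is again periodic or pseudo-Anosov; in the latter subcase the pseudo-Anosov argument above reapplies, while the former forces the periodic alternative. This reduces to showing that no reduction can occur without already placing us in the periodic case.

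\medskip

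Finally, the periodic case: if $[f_z]$ has finite order, a theorem of Nielsen (realization for the disc/sphere) gives an isotopic representative $g$ that is conjugate to a finite-order homeomorphism, necessarily a rotation fixing the center and cyclically permuting the three points, hence rotation by $2\pi/3$ or $4\pi/3$. This yields the first alternative of the theorem. The main obstacle I anticipate is the reducible case: I must argue carefully that the limited topology of the thrice-punctured disc, combined with the period-$3$ constraint, leaves essentially only the two rotation classes as ``trivial'' outcomes, and that every other braid type of period $3$ is pseudo-Anosov and forces all periods. Establishing the precise entropy/period consequences will likely require an explicit train-track or Markov-partition computation for the finitely many period-$3$ braid types rather than an abstract forcing argument.
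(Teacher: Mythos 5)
Your overall frame (the Nielsen--Thurston trichotomy for $[f_z]$ on the thrice-punctured disc, with Ker\'{e}kj\'{a}rt\'{o}/Nielsen realization giving the rotation alternative in the finite-order case) matches the paper, and the reducible case you flag as the main obstacle is in fact the easy part: with three punctures every essential simple closed curve encloses exactly two of them, and no disjoint invariant family of such curves is compatible with a cyclic permutation of the punctures, so reducible braid types of prime period simply do not exist (the paper disposes of this with a citation to \cite{Boy94}). The genuine gap is in your pseudo-Anosov case, which is where the entire content of the theorem lies, and it is twofold. First, your fallback plan --- explicit train-track or Markov-partition computations for ``the finitely many period-3 braid types'' --- rests on a false premise: there are infinitely many pseudo-Anosov braid types of period 3. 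For instance the classes of $(\sigma_1\sigma_2^{-1})^{3m+1}$, $m\geq 0$, all induce a 3-cycle on the punctures, are all pseudo-Anosov, and are pairwise non-conjugate since their dilatations $\lambda^{3m+1}$ are distinct. So no finite enumeration can close the argument.

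Second, the abstract part of your argument (positive entropy, growth of Lefschetz/Nielsen numbers, unremovability of the pseudo-Anosov orbits under isotopy) only yields orbits of every \emph{sufficiently large} least period, with a threshold depending on the dilatation; it says nothing about the small periods $2,4,5,\dots$, which are exactly the delicate part of the statement (the paper's closing remark shows that for period $q>3$ an analogous method only reaches periods $n\geq q-2$, so small periods are genuinely not free). What the paper does instead, uniformly for every pseudo-Anosov class $\phi$, is: compute $L(\phi)=1$ and use the index properties of pseudo-Anosov maps (index $\leq 1$ everywhere, negative on the boundary) to produce an interior fixed point $z_{0}$ of index $1$ and an interior period-2 orbit; then blow up $z_{0}$ into a boundary circle and collapse the three puncture circles back to points, obtaining a chain-transitive homeomorphism of the closed annulus whose boundary rotation numbers are an integer on $S_{\infty}$ (fixed singularities there) and a half-integer $(2l+1)/2$ on the new circle (because $Ind(z_{0},\phi^{2})=-1$ forces two period-2 orbits on it); Franks' theorem \cite{Fra88} then gives a $p/q$-periodic point for every rational strictly between these rotation numbers, hence orbits of every least period $n\geq 2$, all of which lie in the interior when $n\geq 3$. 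Without this device (or an equivalent one) your outline cannot reach the stated conclusion. A secondary gap: transferring the orbits of the pseudo-Anosov model back to $f$ requires care when $f$ is badly behaved near $o(z)$ --- your appeal to Nielsen forcing tacitly assumes $f_{z}$ extends to the compactified surface, which an arbitrary homeomorphism need not do; the paper handles this by approximating $f$ near the orbit and invoking Handel's lemma \cite{Han90}.
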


In short terms, this result means that pseudo-Anosov $3$ braid types
of the disc imply all other periods. By the way, this result is
obviously not true for the only two other $3$-braid types which are
not pseudo-Anosov as cited previously. Therefore, it should be
emphasized that Theorem~\ref{Main_theorem} is the best possible
extension of the paper. ``Period three implies chaos" \cite{LY75}
for the class of interval maps to the class of orientation
preserving homeomorphisms of the disc $D^{2}$.

The proof of this theorem is based on the structure of pseudo-Anosov
homeomorphisms in the disc with three holes and upon some results of
John Franks about the existence of periodic points for
homeomorphisms of the annulus \cite{Fra88}. In section 2, we recall
the basic facts we need and give the proof of
Theorem~\ref{Main_theorem} in section 3.

\section{Background and definitions}

\subsection{Pseudo-Anosov homeomorphisms}

We begin by recalling some properties of pseudo-Anosov
homeomorphisms that we need and that can be found in \cite{FL83} or
\cite{FLP79} (see also \cite{Jia83} for the definition of the fixed
point index). A pseudo-Anosov homeomorphism $\phi$ of a compact
orientable manifold $M$ with negative Euler characteristic is a
homeomorphism such that there exists a pair of transverse measured
foliations $F^{u}$ and $\mathcal{F}^{s}$ and a number $\lambda>1$
with $\displaystyle \phi(\mathcal{F}^{u})=
\lambda^{-1}\mathcal{F}^{u}$ and $\phi(\mathcal{F}^{s})=\lambda
\mathcal{F}^{s}$.

\subsubsection{}\label{211}
Each of the foliation have a finite number of singularities and at
least one on each component of the boundary of $M$. These
singularities are one-prong singularities on the boundary and
$p$-prongs singularities with $p\geq 3$ in the interior of {\it M}.
$\mathcal{F}^{u}$ and $\mathcal{F}^{s}$ have the same singularities
in the interior of $M$ and on each component of the boundary the
singularities of $\mathcal{F}^{u}$ and $\mathcal{F}^{s}$ alternate.
Moreover, we have the following formula for each foliation
\begin{equation*}
\sum_{s}(2-p_{s})=2\chi(M)
\end{equation*}
where the sum is taken over all the singularities $s,$ $p_{s}$ is
the number of prongs at $s$ if $s$ lies in the interior of $M$ and
$p_{s}=3$ if $s$ is a one-prong singularity on the boundary.

\subsubsection{}\label{212}
Let $x$ be a fixed point of $\phi^{n}$. Then $Ind(x,\phi^{n})\neq
0,$ $Ind(x, \phi^{n})\leq 1$ and $Ind(x, \phi^{n})<0$ if $x$ lies on
the boundary of $M$.

\subsubsection{}\label{213}
Let $x$ be a fixed point of $\phi$.
\begin{itemize}

 \item If $Ind(x, \phi)=-1$, then $Ind(x, \phi^{n})=-1$ for all $n$.

 \item If $Ind(x, \phi)=1$, then there exist two positive integers $p$ and $q$ such that:
 \begin{equation*}
    Ind(x, \phi)=Ind(x.\phi^{2})=\ldots=Ind(x, \phi^{q-1})=1
 \end{equation*}
 and
 \begin{equation*}
 Ind(x, \phi^{q})=1-p.
 \end{equation*}
 In that case, there are exactly $p$ stable (resp. unstable)
 half-leaves starting at $x$ and the permutation induced by $\phi$ on
 theses half-leaves consists of $p/q$ $q$-cycles (if $p\geq 3,$ $x$
 is a singularity of the foliations).

\end{itemize}

\subsubsection{}\label{214}
If $\phi$ has a periodic point of least period $n$ which is not on
the boundary of $M$, then every map $g$ isotopic to $\phi$ must
possess a periodic point of least period $n$.

\subsubsection{}\label{215}
$\phi$ has a dense orbit.

\subsection{Homeomorphisms of the annulus}

Next, we briefly review some results about homeomorphisms of the
annulus and chain transitivity [9]. Let $f$ $:$ $A$ $\rightarrow A$
a homeomorphism of the annulus $A=S^{1}\times[0,1]$ which is
orientation preserving and boundary component preserving.Let $\pi$
$:$ $\tilde{A}$ $\rightarrow A$ be the universal cover,
$\tilde{A}=\mathbb{R}\times[0,1]$ and $p_{1}$ :
$\tilde{A}\rightarrow \mathbb{R}$ the projection onto the first
coordinate.$ $For each $x\in A$, choose a lift
$\tilde{x}\in\tilde{A}$ and consider:
\begin{equation*}
\lim_{n\to\infty}\frac{p_{1}(\tilde{f}^{n}(\tilde{x}))-p_{1}(\tilde{x})}{n}
\end{equation*}
where $\tilde{f}$ is a lift of $f$. When this limit exists, it is
called the $\tilde{f}$-rotation number of $x$ and denoted
$\rho(\tilde{f}, x)$. It is independent of the choice of the lift
$\tilde{x}$of $x$. If $x$ belongs to one component of the boundary,
$\rho(x,\tilde{f})$ is defined and independent of the choice of $x$
on this component. Let $\rho_{1}(\tilde{f})$ and
$\rho_{2}(\tilde{f})$ these two numbers. A periodic point $z$ is
called a $p/q$ periodic point relative to $\tilde{f}$ if $z$ has
least period $q$ and $\tilde{f}^{q}(\tilde{z})=\tilde{z}+(p, 0)$ for
some lift (and hence all) $\tilde{z}$ of $z$.

A homeomorphism $f$ is called chain-transitive if for any $x,$ $y\in
A$ and $\epsilon>0$ there exists a sequence $x_{0}=x,$ $x_{1},$
$\ldots$ $,$ $x_{n}=y,$ $n\geq 1$ such that $ d(f(x_{i}),
x_{i+1})<\epsilon$. In particular if $f$ has a dense orbit it is
chain transitive.

\begin{thm}[Franks]\label{Franks_theorem}
Let $f$ : $A\rightarrow A$ be an orientation preserving, boundary
component preserving homeomorphism which is chain transitive and
$\tilde{f}$ : $\tilde{A}\rightarrow\tilde{A}$ a lift of $f$. Then
for every rational $p/q$ between $\rho_{1}(\tilde{f})$ and
$\rho_{2}(\tilde{f}),$ $f$ has a $p/q$ periodic point.
\end{thm}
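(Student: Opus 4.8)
The plan is to deduce the periodic point statement from a single fixed-point lemma applied after a standard renormalisation. To locate a $p/q$ periodic point (with $p/q$ in lowest terms, which will force the least period to be exactly $q$) one seeks a point $z$ of period $q$ with $\tilde{f}^{q}(\tilde{z})=\tilde{z}+(p,0)$, that is, a fixed point of the lift $\tilde{F}=T_{-p}\circ\tilde{f}^{q}$, where $T_{-p}$ denotes the deck translation $(s,t)\mapsto(s-p,t)$. Under this renormalisation the boundary rotation numbers transform linearly, $\rho_{i}(\tilde{F})=q\,\rho_{i}(\tilde{f})-p$, so the hypothesis that $p/q$ lies strictly between $\rho_{1}(\tilde{f})$ and $\rho_{2}(\tilde{f})$ becomes exactly the statement that $0$ lies strictly between $\rho_{1}(\tilde{F})$ and $\rho_{2}(\tilde{F})$, with strictly opposite signs on the two invariant boundary circles. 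Everything thus reduces to the assertion that a lift whose two boundary rotation numbers have strictly opposite signs possesses a fixed point.

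The engine is a fixed-point lemma for chain transitive annulus maps: if a lift $\tilde{g}$ has a \emph{positively returning} point $\tilde{x}$, with $p_{1}(\tilde{g}(\tilde{x}))>p_{1}(\tilde{x})$, and a \emph{negatively returning} point $\tilde{y}$, with $p_{1}(\tilde{g}(\tilde{y}))<p_{1}(\tilde{y})$, then $\tilde{g}$ has a fixed point. Since each $\rho_{i}(\tilde{F})$ is the average horizontal displacement along the corresponding invariant boundary circle, a definite sign for $\rho_{i}(\tilde{F})$ forces, by an intermediate-value argument along that circle, an honest boundary point moving in that direction; the opposite signs then supply both a positively and a negatively returning point. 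The one genuine subtlety is that $F=f^{q}$ need not be chain transitive even though $f$ is, so the lemma cannot be applied to $F$ verbatim. I would circumvent this as Franks does, by running the argument at the level of $\epsilon$-chains for $f$ itself while tracking the accumulated horizontal translation: chain transitivity of $f$ produces periodic $\epsilon$-chains of every average rotation strictly between $\rho_{1}(\tilde{f})$ and $\rho_{2}(\tilde{f})$, in particular of average rotation $p/q$, and the fixed-point argument upgrades such a family of pseudo-periodic orbits, as $\epsilon\to 0$, to a genuine $p/q$ periodic point.

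For the lemma itself I would argue by contradiction. Suppose $\tilde{g}$ has no fixed point. Viewing $\tilde{g}$ as a fixed-point-free orientation-preserving homeomorphism of the essentially planar strip $\tilde{A}$, Brouwer plane-translation theory shows it has no periodic orbits and that its dynamics are organised into translation domains, which impose a coherent drift direction on the recurrent behaviour. Chain transitivity of $g$ should then force this drift to be global: the set of points reachable by positive $\epsilon$-chains is open and invariant, and the translation structure keeps it disjoint from the set reachable by negative $\epsilon$-chains, so chain transitivity collapses the two returning directions into a single coherent sign, contradicting the simultaneous presence of $\tilde{x}$ and $\tilde{y}$. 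The main obstacle is precisely this last step: extracting from Brouwer theory a globally coherent drift and showing, at the level of pseudo-orbits rather than genuine orbits, that it is incompatible with two-sided returning under chain transitivity. Making the free-disk and translation-domain bookkeeping interact correctly with $\epsilon$-chains is the technical heart of the proof.
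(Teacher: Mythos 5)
First, a point of calibration: the paper itself contains no proof of this statement --- it is Franks' theorem, imported as background from the reference [Fra88] --- so your proposal can only be compared with Franks' original argument. Its skeleton you have identified correctly: renormalise by $\tilde{F}=T_{-p}\circ\tilde{f}^{q}$ so that $p/q$ periodic points of $f$ become fixed points of $\tilde{F}$ (and coprimality of $p$ and $q$ forces least period $q$), then produce a fixed point from two-sided recurrence via Brouwer theory.

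The genuine gap is your engine lemma: it is false as stated. Opposite signs of the \emph{single-iterate} displacement $p_{1}(\tilde{g}(\tilde{x}))-p_{1}(\tilde{x})$ at two points, together with chain transitivity of $g$, do not imply that $\tilde{g}$ has a fixed point. Counterexample: let $R_{\alpha}$ be the rigid rotation of $A$ by a small irrational $\alpha>0$, and let $\psi$ be a homeomorphism of $A$, equal to the identity near $\partial A$, supported in an embedded disc $D\subset \mathrm{int}(A)$, which rotates a concentric subdisc of $D$ of radius larger than $\alpha$ by the angle $\pi$. Then $g=\psi R_{\alpha}\psi^{-1}$ is conjugate to $R_{\alpha}$, hence chain transitive, and its lift $\tilde{g}=\tilde{\psi}\tilde{R}_{\alpha}\tilde{\psi}^{-1}$ is fixed point free; yet points near $\partial A$ have displacement $+\alpha$, while the point $\tilde{x}=\tilde{\psi}(\tilde{c}-(\alpha/2,0))=\tilde{c}+(\alpha/2,0)$ (where $\tilde{c}$ is the centre of the rotated subdisc) is sent by $\tilde{g}$ to $\tilde{c}-(\alpha/2,0)$, so it has displacement $-\alpha$. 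What Franks' fixed point theorem actually requires is a positively and a negatively \emph{returning disc}: a free disc $U$ (one with $g(U)\cap U=\emptyset$) some forward iterate of which meets a positive (respectively negative) integer deck translate of its lift. This is exactly the information you discard when you use the intermediate value theorem to replace ``invariant boundary circle with rotation number of definite sign'' by ``one point that moves right after one iterate'': it is the invariance of the boundary circles, along which every orbit wraps coherently around the annulus with the sign of the rotation number, that furnishes the two returning discs.

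A secondary problem is that the subtlety you flag --- that $F=f^{q}$ may fail to be chain transitive --- is not actually there, while the workaround you propose in its place does not work. On a compact connected space, chain transitivity of $f$ makes every point chain recurrent; the chain recurrent set is unchanged under passing to iterates (group a $\delta$-chain for $f$ whose length is a multiple of $q$ into an $\epsilon$-chain for $f^{q}$, using uniform continuity, and adjust lengths by concatenation); and on a connected space chain recurrence of every point gives back chain transitivity, since chain classes are then open. Hence $f^{q}$ is chain transitive and the fixed point theorem applies to $T_{-p}\circ\tilde{f}^{q}$ directly. By contrast, your substitute mechanism --- periodic $\epsilon$-chains of average rotation $p/q$ ``upgraded, as $\epsilon\to 0$, to a genuine $p/q$ periodic point'' --- is not valid: pseudo-periodic orbits of a homeomorphism do not converge to genuine periodic orbits without a shadowing-type hypothesis. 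In Franks' proof the periodic point is not obtained as a limit at all; it arises by contradiction, because a fixed-point-free orientation preserving homeomorphism of the plane admits no periodic free disc chain (Brouwer's non-recurrence lemma), whereas chain transitivity of $f^{q}$ together with the two returning discs manufactures precisely such a chain.
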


\section{Proof of main theorem}

We write $z_{i}=f^{i}(z),$ $(i=1,2,3)$ for the points of the orbits
of $z$ and set $o(z)=\{z_{1}, z_{2}, z_{3}\}$. Let
$D_{z}=D^{2}-o(z),$ $M$ be the compact surface obtained from $D_{z}$
by compactifying each puncture $z_{i}$ by a circle $S_{i}$ and $\pi$
$:$ $M$ $\rightarrow D^{2}$ be the map which collapses each circle
$S_{i}$ to $z_{i}$.

There is a natural isomorphism between the mapping class group of
$D_{z}$ and the mapping class group of $M$. In our case $[f_{z}]$ is
not reducible (in the sense of Nielsen-Thurston) because $z$ is a
point of period 3 and there are no reducible braid type of prime
period \cite{Boy94}. Consequently $[f_{z}]$ is either of finite
order or of pseudo-Anosov type. If $[f_{z}]$ is of finite order,
then there exists a homeomorphism $g$ in $D^{2}$ which is isotopic
to $f$ rel. $o(z)$ and satisfy $g^{3}=Id$. Consequently, by a
theorem of Ker\'{e}kj\'{a}rt\'{o} \cite{CK94,Eil34}, $g$ is conjugate to a
rotation by $2\pi/3$ or $4\pi/3$ and we are done. If $[f_{z}]$ is of
pseudo-Anosov type, we are going to show that $f$ possesses a
periodic point of least period $n$ for each integer $n$. Let $\phi$
be a pseudo-Anosov homeomorphism of $M$ in the class $[f_{z}]$.

\begin{lem}\label{Main_lemma}
$\phi$ has a periodic point of least period $n$ in the interior of
$M$ for each $n\in \mathbb{N}^{*}\backslash\{3\}$.
\end{lem}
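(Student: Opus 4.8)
The plan is to count the fixed points of $\phi^{n}$ with the help of the fixed point index. Since $M$ is a disc with three holes, $H_{0}(M)=\mathbb{Z}$, $H_{2}(M)=0$ and $H_{1}(M)=\mathbb{Z}^{3}$, a basis of $H_{1}$ being given by the three inner boundary circles $S_{1},S_{2},S_{3}$. As $\phi$ permutes these circles cyclically, $\phi_{*}$ acts on $H_{1}(M)$ as a $3$-cycle, whence the Lefschetz number $L(\phi^{n})=\sum_{x\in\mathrm{Fix}(\phi^{n})}Ind(x,\phi^{n})$ equals $1$ when $n\not\equiv 0 \pmod 3$ and $-2$ when $n\equiv 0 \pmod 3$. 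This computation already isolates $n=3$: the three circles carry the collapsed orbit $o(z)$ and rotate with period exactly $3$. By \ref{212} every fixed point of $\phi^{n}$ has nonzero index, so each lies in its own essential Nielsen class; by \ref{213} the finitely many foliation singularities contribute index $1$ (prongs cyclically permuted) or $1-p\le -2$ (prongs fixed), while a regular interior fixed point contributes $-1$ or $+1$ according as its separatrices are fixed or flipped.

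Because $\phi$ is pseudo-Anosov it has a dense orbit (\ref{215}), hence positive entropy, and I would make the count explicit using the pseudo-Anosov $3$-braid type. Its invariant train track carries a primitive nonnegative integer transition matrix $A$ with Perron eigenvalue $\lambda>1$; the associated subshift of finite type is semiconjugate to $\phi$ by a map which is bijective off a finite union of singular leaves, so that $\#\mathrm{Fix}(\phi^{n})$ agrees with $\operatorname{tr}(A^{n})$ up to a controlled correction coming from the singularities and from the boundary. Mbius inversion over the proper divisors $m\mid n$ then isolates the points of least period $n$, and since $\operatorname{tr}(A^{n})$ grows like $\lambda^{n}$ it dominates every lower-period and boundary contribution. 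The boundary periodic points are exactly the finitely many prong singularities on $\partial M$; in particular the symbolic orbits of period $3$ are realized on the three rotating circles $S_{i}$, that is, on the collapsed orbit $o(z)\subset\partial M$, so they fail to be interior --- which is precisely why $n=3$ must be excluded.

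From here one concludes that for every sufficiently large $n$ there is an interior orbit of least period $n$, each such point being an essential, nonzero-index interior fixed point produced by the train track rather than a boundary prong. The main obstacle is to upgrade this from large $n$ to all $n\in\mathbb{N}^{*}\setminus\{3\}$, the small iterates being the delicate ones: for them the correction terms in $\#\mathrm{Fix}(\phi^{n})=\operatorname{tr}(A^{n})+(\text{correction})$ are comparable to the leading term, so I would track explicitly which symbolic periodic orbits collapse onto boundary singularities or onto orbits of lower period. The crux is therefore to verify, using the explicit matrix $A$ together with the index bookkeeping above, that at least one interior least-period-$n$ orbit survives for every $n\ne 3$, and that the rotation of the three boundary circles --- responsible both for $L(\phi^{3})=-2$ and for the collapsed orbit sitting on $\partial M$ --- is the one and only obstruction.
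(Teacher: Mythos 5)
Your Lefschetz computation is correct ($L(\phi^{n})=1-Tr(\phi_{1}^{*n})$, equal to $1$ when $n\not\equiv 0 \pmod 3$ and $-2$ when $n\equiv 0\pmod 3$), and the asymptotic part of your argument is standard: since $\mathrm{tr}(A^{n})$ grows like $\lambda^{n}$ while the lower-period and boundary corrections grow strictly slower, interior orbits of least period $n$ exist for all \emph{sufficiently large} $n$. But that is where the proof stops, and the remaining gap is genuine, not a formality. Your plan to settle the small periods ``using the explicit matrix $A$'' cannot be carried out: the lemma quantifies over an arbitrary pseudo-Anosov representative of an arbitrary pseudo-Anosov $3$-braid type, and there are infinitely many such types, with unbounded dilatation and train tracks of unbounded size, so there is no one explicit matrix to compute with. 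Nor can the ``index bookkeeping'' close the gap: by \ref{213}, a regular interior fixed point of $\phi$ whose two separatrices are swapped has index $+1$ under every \emph{odd} power of $\phi$, while boundary fixed points keep negative index under all powers (\ref{212}); consequently, the equality $L(\phi^{5})=1$, say, is perfectly consistent with $\mathrm{Fix}(\phi^{5})=\mathrm{Fix}(\phi)$ (three interior swapped-separatrix points of index $+1$ and two boundary singularities of index $-1$ balance to $1$ under both $\phi$ and $\phi^{5}$), so Lefschetz numbers plus \ref{212} and \ref{213} alone never force an orbit of least period $5$. Two further slips: you place the period-$3$ obstruction ``on the collapsed orbit $o(z)\subset\partial M$'', but $o(z)$ is not a subset of $M$ at all --- the boundary period-$3$ orbits of $\phi$ are the one-prong singularities on the cyclically permuted circles $S_{1},S_{2},S_{3}$; and your discussion of interior singularity indices presupposes facts you never establish --- one must compute $\chi(M)=-2$ and invoke \ref{211} to see that the foliations have \emph{no} interior singularities here.

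The missing idea --- and the way the paper handles all periods uniformly rather than asymptotically --- is to convert the problem into a rotation-number problem on an annulus. From $L(\phi)=L(\phi^{2})=1$ together with \ref{212}, \ref{213} and the absence of interior singularities, one extracts a \emph{regular} interior fixed point $z_{0}$ with $Ind(z_{0},\phi)=1$ (hence swapped separatrices and $Ind(z_{0},\phi^{2})=-1$) and an interior period-$2$ orbit. Blowing up $z_{0}$ into a circle $S_{0}$ and collapsing $S_{1},S_{2},S_{3}$ back to points produces a homeomorphism $\Phi$ of a closed annulus, chain transitive because $\phi$ has a dense orbit (\ref{215}), whose boundary rotation numbers are an integer on $S_{\infty}$ (its singularities are fixed) and a half-integer $(2l+1)/2$ on $S_{0}$ (the swapped separatrices give two period-$2$ orbits there). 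These differ by at least $1/2$, so Theorem~\ref{Franks_theorem} yields, for every $n\geq 2$, a $p/n$-periodic point of least period exactly $n$; all periodic points of period at least $3$ lie in the open annulus, and apart from the collapsed orbit $\{z_{1},z_{2},z_{3}\}$ itself --- which is exactly why $n=3$ is excluded --- they are interior periodic points of $\phi$. This argument is uniform in $n$ and in the braid type, which is precisely what your counting argument lacks; to salvage your approach you would need a lower bound on the number of interior least-period-$n$ Nielsen classes valid for \emph{every} $n\neq 3$ and every pseudo-Anosov $3$-braid type, and that is essentially what Franks' theorem is supplying.
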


\begin{proof}[Proof of Lemma~\ref{Main_lemma}]
$\chi(M)=-2$ ($M$ is a sphere with $4$ holes !), so for each
foliation $\mathcal{F}^{u}$ and $\mathcal{F}^{s}$ there is exactly
one singularity on each boundary component of $M$ and no singularity
in the interior of $M$ (\ref{211}). Let $S_{\infty}$ be the
component of $\partial M$ corresponding to the boundary of $D^{2}$.
$S_{\infty}$ is invariant under $\phi$ and the singularities of the
two foliations on $S_{\infty}$ are fixed points for $\phi$ (see
\autoref{fig}).

Let $L(\phi)$ be the Lefschetz number of $\phi$ (see \cite{Jia83}
for definition and computation of the Lefschetz number). We have:
\begin{equation*}
L(\phi)=\sum_{x\in Fix(\phi)}Ind(x, \phi)=1-Tr(\phi_{1}^{*})=1,
\end{equation*}
where $\phi_{1}^{*}$ is the action induced by $\phi$ on the first
homology group of $M$. Therefore, by (\ref{212}), $\phi$ has a fixed
point $z_{0}$ such that $Ind(z_{0}, \phi)=1$. It results from
(\ref{212}) that $z_{0}$ lies in the interior of $M$ and thus is not
a singularity of the foliations. For the same reason
$(L(\phi^{2})=1),$ $\phi^{2}$ has a a fixed point of index $1$ in
the interior of $M$ which is not a fixed point of $\phi$
(\ref{213}). Therefore $\phi$ has a periodic point of period 2 in
the interior of $M$.

Blow up $z_{0}$ into a circle $S_{0}$ (there is no problem to extend
the pseudo-Anosov $\phi$ to $S_{0}$) and collapse the three circles
$S_{1},$ $S_{2},$ $S_{3}$ to the original points $z_{1},$ $z_{2},$
$z_{3}$. Using the fact that $\phi$ has a dense orbit (\ref{215}),
we obtain by this way a chain-transitive homeomorphism of a closed
annulus $A$ that we shall call $\Phi$.

Because $z_{0}$ is not a singularity of the foliation, we have
$Ind(z_{0}, \phi^{2})=-1$ by (\ref{213}) and thus $\Phi$ has two
period 2 periodic orbits on $S_{0}$.

Let $\tilde{\Phi}$ be any lift of $\Phi$ to the universal cover
$\tilde{A}$ of $A$ and call $\rho_{0}(\tilde{\Phi})$ (resp.
$\rho_{\infty}(\tilde{\Phi})$) the $\tilde{\Phi}$-rotation number of
$\Phi/s_{0}$ (resp. $\Phi/s_{\infty}$). Since $\Phi$ has a fixed
point on $S_{\infty}$ and a periodic point of. period 2 on $S_{0},$
$\rho_{\infty}(\tilde{\Phi})=k$ for some $k\in \mathbb{Z}$ and
$\rho_{0}(\tilde{\Phi})=(2l+1)/2$ for some $l\in \mathbb{Z}$. Using
Theorem~\ref{Franks_theorem} we conclude that$\Phi$ has a periodic
point of least period $n$ for each $n\geq 2$. All the periodic
points of period $n\geq 3$ belong to the interior of $A$ and
correspond therefore to periodic points of $\phi$.
\end{proof}

\begin{figure}
\begin{center}
\includegraphics{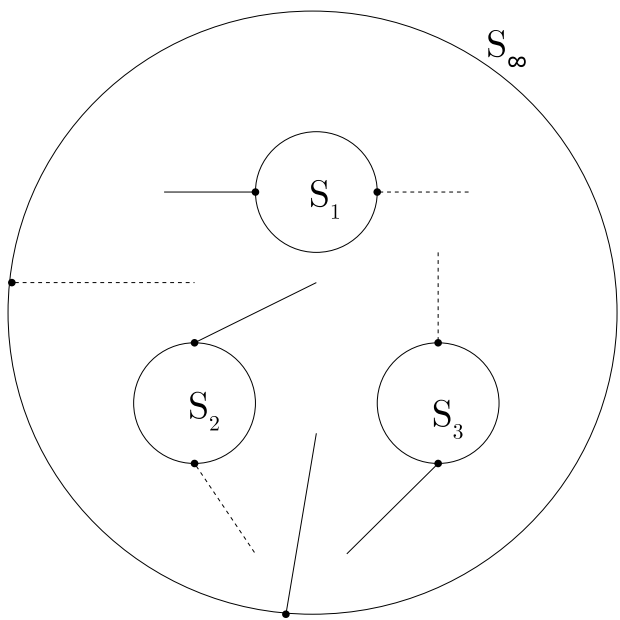}
\caption{The manifold $M$} \label{fig}
\end{center}
\end{figure}

Let us now give the proof of Theorem~\ref{Main_theorem}. Suppose
first that $f$ is continuously differentiable around the points of
$\mathrm{o}(z)$. Then we can extend $f_{z}$ to a homeomorphism $F$
$:$ $M\rightarrow M$ which is isotopic to $\phi$ and there is a one
to one correspondence between periodic points of $F$ in $int(M)$ and
periodic points of $f_{\mathrm{Z}}$ in $int(D_{z})$. Using the
previous lemma and (\ref{214}) $f$ has a periodic point of least
period $n$ for every $n\in \mathbb{N}^{*}$.

In the case where $f$ is not supposed to be well-behaved near
$\mathrm{o}(z)$, we construct a sequence $f_{k}$ of homeomorphisms
of $D^{2}$ which coincide with $f$ away from a
$\mathrm{l}/k$-neighbourhood of $\mathrm{o}(z)$, which act on
$\{z_{1}, z_{2}, z_{3}\}$ as $f$ but which are continuously
differentiable around these points. Each $f_{k}$ extends to a
homeomorphism $F_{k}$ of $M$ which is isotopic to $\phi$ in $M$ and
\cite{Han90}:

\begin{lem}[Handel]\label{Handel_lemma}
For every $\epsilon>0$, there exists $\delta>0$ (independent of $k$)
such that if $\phi$ has a periodic point $x$ of least period $n$ and
$ d(o(\phi, x), \partial M)>\epsilon$ then for every $k,$ $F_{k}$
has a periodic point $x_{k}$ of least period $n$ such that $
d(o(F_{k}, x_{k}),
\partial M)>\delta$.
\end{lem}

Therefore, $f$ has a periodic point of every period. $\square $

\begin{rmk}
If we consider no longer a periodic point of period 3 but a periodic
point of period $q>3$ such that $bt(z, f)$ is of type pseudo-Anosov
and if the singularities of the foliations of a pseudo-Anosov
representative $\phi$ on $S_{\infty}$ are fixed points of $\phi$,
then the proof of Theorem~\ref{Main_theorem} holds. There exists a
fixed point $z_{0}$ of index $1$ for $\phi$ and such that when we
blow up $z_{0}$ into a circle $S_{0}$ we get
$\rho(\phi_{/S_{0}})=k/l$ where $k$ and $l$ are coprime with $2\leq
f\leq q-2$. Therefore, we can prove that $f$ has periodic points of
period $n$ for every $n\geq q-2$. The general situation seems to be
much more complex.
\end{rmk}


\end{document}